\numberwithin{equation}{section}
\newtheorem{theorem}{Theorem}[section]
\newtheorem{corollary}[theorem]{Corollary}
\newtheorem{lemma}[theorem]{Lemma}
\theoremstyle{definition}
\newtheorem{remark}[theorem]{Remark}
\newcommand{\mrank}{\mu\text{rank}}
\newcommand{\R}{{\mathbb R\strut}}
\title{Minimality of tensors of fixed multilinear rank}
\author[A. Heaton]{Alexander Heaton}
\email{alexheaton2@gmail.com \vspace{-10pt}}
\address{\scriptsize Max Planck Institute for Mathematics in the Sciences, Leipzig, Technische Universit\"{a}t Berlin}
\author[K. Kozhasov]{Khazhgali Kozhasov}
\email{k.kozhasov@tu-braunschweig.de \vspace{-10pt}}
\address{
     \scriptsize Technische Universit\"{a}t Braunschweig, Institut f\"{u}r Analysis und Algebra
}
\author[L. Venturello]{Lorenzo venturello}
\email{lorenzo.venturello@mis.mpg.de, lorenzo.venturello@hotmail.it \vspace{-10pt}}
\address{
	 \scriptsize Max Planck Institute for Mathematics in the Sciences, Leipzig
}
\subjclass[2010]{15A69, 53A10, 53A45, 49Q05}
\keywords{higher-order singular value decomposition (HOSVD), multilinear rank, tensor rank, Tucker decomposition, minimal surface, minimal submanifold, mean curvature}
\begin{document}
	\begin{abstract}
    We discover a geometric property of the space of tensors of fixed multilinear (Tucker) rank. Namely, it is shown that real tensors of fixed multilinear rank form a minimal submanifold of the Euclidean space of tensors endowed with the Frobenius inner product. We also establish the absence of local extrema for linear functionals restricted to the submanifold of rank-one tensors, finding application in statistics. 
\end{abstract}
\maketitle
\section{Introduction}\label{section:introduction}
In the following by $\R^{n_1}\otimes \dots\otimes \R^{n_d}$ we denote the space of real $(n_1,\dots,n_d)$-tensors whose elements are identified with arrays $T=(t_{i_1\dots i_d})$ of real numbers. The space $\R^{n_1}\otimes \dots\otimes \R^{n_d}$ is endowed with the standard \emph{Frobenius inner product} that is defined by
\begin{align}\label{Frob}
    	\langle T, S \rangle ~ = ~ \sum_{j=1}^d\sum_{i_j=1}^{n_j} t_{i_1\dots i_d}s_{i_1\dots i_d},
	\end{align}
	    where $T=(t_{i_1\dots i_d})$, $ S=(s_{i_1\dots i_d})$. The \emph{multilinear rank} of $T \in \R^{n_1} \otimes \cdots \otimes \R^{n_d}$ is the tuple 
		\[
			\mrank(T)~ =~\min\left\{(r_1,\dots,r_d)\in \mathbb{N}_0^d: ~T\in W_1\otimes\cdots\otimes W_d, ~W_j\subseteq \R^{n_j},~ \dim W_j = r_j\right\},
		\]
		where $\mathbb{N}_0 = \{0,1,2,\dots\}$ and each $W_j$ is a linear subspace of $\R^{n_j}$.
The number $r_j$ is equal to the rank of the $n_j \times \prod_{k \neq j} n_k$ matrix obtained by flattening or unfolding or matricizing the tensor along mode $j$ \cite[Theorem $6.13$]{Hackbusch}. In the case $d=2$ the multilinear rank $(r_1,r_2)$ of a matrix $T$ satisfies $r_1=r_2=\text{rank}(T)$, since the rank of the row and column space of any matrix coincide. For tensors with more than $2$ modes, the multilinear rank is different than the classical rank
	\begin{equation*}
	    \text{rank}(T) = \text{min} \left\{ r \in \mathbb{N}_0 ~:~ T = \sum_{i=1}^r v_{1,i} \otimes \cdots \otimes v_{d,i}, \,\,\,\,\, v_{j,i} \in \R^{n_j} \right\}.
	\end{equation*}
	The latter notion of rank is also important for many applications (see \cite{Hitchcock1927ExpressionTensorPolyadicSumOfProducts}), but will not be the focus of this article. 
For $d\geq 1$ and $\mathbf{r}\in \mathbb{N}_0^d$, let $\mathcal{T}_{d,\mathbf{r}}$ be the set of order $d$ tensors with multilinear rank $\mathbf{r}$. 
It is well-known (see, for example, \cite{UschmajewVandereycken2013geometryAlgorithmsUsingHierarchicalTensors}) that this set is a smooth submanifold of $\R^{n_1}\otimes\cdots\otimes \R^{n_d}$ of dimension
		\[
			\dim(\mathcal{T}_{d,\mathbf{r}})~ = ~ \sum_{j=1}^d r_j(n_j-r_j) + \prod_{j=1}^d r_j.
		\]
	The topology of $\mathcal{T}_{d,\mathbf{r}}$ was recently studied in \cite{ComonLimQiYe2020TopologyOfTensorRanks}. They show, for example, that $\mathcal{T}_{d,\mathbf{r}}$ is path-connected unless $n_j = r_j = \prod_{k \neq j} r_j$ for some $j$. The notion of multilinear rank was introduced in \cite{Kruskal1989RankDecompositionUniqueness3WayNwayArrays} and popularized in \cite{DeLathauwerDeMoorVandewalle2000aMultilinearSingularValueDecomposition} where it was used to show that the Tucker decomposition (see \cite{Tucker1966SomeMathematicalNotesOnThreeModeFactorAnalysis}) provides a convincing generalization of the matrix singular value decomposition (SVD). The set $\mathcal{T}_{d,\mathbf{r}}$ and the Tucker decomposition in general have been used in numerous interesting applications, including the technique TensorFaces in computer vision \cite{VasilescuTerzopoulos2002MultilinearAnalysisOfImageEnsembles}. Note that throughout this article we refer to the set of tensors of fixed multilinear rank, rather than the related \textit{subspace variety} consisting of all tensors with $\mu\text{rank}(T) \leq (r_1,\dots,r_d)$, which is also a well-studied object of interest \cite[Chapter 7]{Landsberg2012TensorsGeometryApplicationsTEXT}.
	
	A common task is to find the best low-rank approximation of a tensor \cite{DeLathauwerDeMoorVandewalle2000BestRank1AndRankR1R2-RNApproximationHigherOrderTensors}. For matrices, the celebrated Eckart-Young theorem \cite{Eckart1936TheAO} states that truncating the SVD yields a closed-form solution,
	but for tensor rank the problem is ill-posed \cite{deSilvaLim2008TensorRankAndIllposednessBestLowRankApprox} due to the phenomenon of \textit{border rank}. 
	The subspace variety is Zariski closed, providing a nice setting for existence, but is not smooth. As a consequence, computing the best multilinear approximation over the subspace variety becomes more difficult, since not all points admit tangent spaces. 
	
	In contrast, the problem of finding the best multilinear low-rank approximation is well-posed on $\mathcal{T}_{d,\mathbf{r}}$ with a unique solution with respect to the Frobenius norm \cite[Corollary 4.5]{deSilvaLim2008TensorRankAndIllposednessBestLowRankApprox}. The manifold structure of low-rank tensors has been used in numerical analysis and computational physics (see \cite{GrasedyckKressnerTobler2013LiteratureSurveyLowRankTensorApproximation} for an overview) and, in general, methods of Riemannian optimization can be utilized \cite{AbsilMahonySepulchre2008OptimizationAlgorithmsMatrixManifoldsTEXT, EdelmanAriasSmith1999GeometryAlgorithmsOrthogonalityConstraints}. In recent work \cite{KressnerSteinlechnerVandereycken2014LowRankTensorCompletionRiemannianOptimization}, the authors studied the problem of tensor completion, filling in the missing entries of a tensor to achieve a low-rank tensor. Tensors of fixed multilinear rank $\mathcal{T}_{d,\mathbf{r}}$ were used due to their manifold structure, implementing a version of nonlinear conjugate gradient method, see also \cite{DaSilvaHerrmann2015OptimizationOnHierarchicalTuckerManifoldTensorCompletion}. $\mathcal{T}_{d,\mathbf{r}}$ was also used in \cite{EldenSavas2009NewtonGrassmannBestMultilinearRankApproximationTensor, SavasLim2010QuasiNewtonMethodsGrassmanniansMultilinearApproxTensors} to formulate the tensor approximation problem over a product of Grassmann manifolds. In \cite{KochLubich2010DynamicalTensorApproximation}, the manifold structure of $\mathcal{T}_{d,\mathbf{r}}$ was used to derive nonlinear differential equations whose solution is a time-varying family of tensors $S(t) \in \mathcal{T}_{d,\mathbf{r}}$ which provide the best multilinear rank approximation to a given family $T(t) \in \R^{n_1} \otimes \cdots \otimes \R^{n_d}$, a problem called \textit{dynamical tensor approximation}. Therefore the geometry of the set $\mathcal{T}_{d,\mathbf{r}}$ is important for applications.

Our main result discovers a new geometric property of $\mathcal{T}_{d,\mathbf{r}}$, namely its minimality. 
\begin{theorem}\label{main}
For any $\mathbf{r}\in \mathbb{N}_0^d$ the manifold $\mathcal{T}_{d,\mathbf{r}}$ is a minimal submanifold of the Euclidean space $(\R^{n_1}\otimes\dots\otimes \R^{n_d},\langle\cdot,\cdot\rangle)$, that is, its mean curvature vector field is identically zero. 
\end{theorem}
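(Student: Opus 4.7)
The plan is to exploit the isometric action of $G:=O(n_1)\times\cdots\times O(n_d)$ on $\R^{n_1}\otimes\cdots\otimes\R^{n_d}$ by mode-wise orthogonal transformations. This action preserves the Frobenius inner product and the multilinear rank (the mode-$j$ matricization of $g\cdot T$ differs from that of $T$ by an orthogonal equivalence), so it restricts to isometries of $\mathcal{T}_{d,\mathbf{r}}$, and the mean curvature vector field transforms equivariantly. In particular, whenever $g\in G$ fixes $T$, we must have $g\cdot H_T=H_T$. My strategy is to exhibit a subgroup $K\subseteq G$ stabilizing $T$ whose fixed subspace of the ambient space is already contained in $T_T\mathcal{T}_{d,\mathbf{r}}$; since $H_T$ will then be simultaneously tangent and normal, it must vanish.

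Concretely, given $T\in\mathcal{T}_{d,\mathbf{r}}$, I would let $W_j\subseteq\R^{n_j}$ be the minimal subspace with $T\in V_0:=W_1\otimes\cdots\otimes W_d$, so $\dim W_j=r_j$, and set
\[
K := O(W_1^\perp)\times\cdots\times O(W_d^\perp),
\]
embedded in $G$ as the subgroup acting trivially on each $W_j$ and standardly on each $W_j^\perp$. Since $T$ is supported in $V_0$ and $K$ fixes $V_0$ pointwise, $K$ stabilizes $T$. The ambient space splits $K$-invariantly as the orthogonal direct sum $\bigoplus_{\epsilon\in\{0,1\}^d}V_\epsilon$ with $V_\epsilon=W_1^{\epsilon_1}\otimes\cdots\otimes W_d^{\epsilon_d}$, where $W_j^0=W_j$ and $W_j^1=W_j^\perp$. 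Using that the standard representation of $O(m)$ on $\R^m$ has no nonzero invariant vector for $m\geq 1$, together with the multiplicativity of invariants under external tensor products, I would conclude that $V_\epsilon^K=0$ whenever any $\epsilon_j=1$; hence the full $K$-fixed subspace of the ambient space is exactly the ``core block'' $V_0$.

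It then remains to check that $V_0\subseteq T_T\mathcal{T}_{d,\mathbf{r}}$, which is immediate: since $T$ has full multilinear rank $\mathbf{r}$ already as a tensor in $V_0\cong\R^{r_1\times\cdots\times r_d}$, a whole neighborhood of $T$ inside $V_0$ remains in $\mathcal{T}_{d,\mathbf{r}}$, so $V_0$ is tangent. Combining, $H_T$ is $K$-invariant and thus lies in $V_0$, but it is also normal to $T_T\mathcal{T}_{d,\mathbf{r}}\supseteq V_0$, forcing $H_T=0$. The degenerate cases are painless: if some $r_j=0$ then $\mathcal{T}_{d,\mathbf{r}}=\{0\}$, while if $r_j=n_j$ for every $j$ then $\mathcal{T}_{d,\mathbf{r}}$ is open in $\R^{n_1}\otimes\cdots\otimes\R^{n_d}$ and is trivially minimal. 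The only step calling for genuine care is the representation-theoretic identification of the $K$-fixed ambient subspace with $V_0$, but this is a short consequence of the irreducibility of the standard representation of $O(m)$; I do not foresee a serious obstacle, and in particular no direct computation of the second fundamental form is required.
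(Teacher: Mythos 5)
Your proof is correct, and it takes a genuinely different route from the paper's. The paper proceeds by direct computation: it builds an explicit local parametrization of $\mathcal{T}_{d,\mathbf{r}}$ at $T$ by perturbing the core tensor inside $V_0=W_1\otimes\cdots\otimes W_d$ and moving the subspaces $W_j$ via ordered products of one-parameter orthogonal subgroups $e^{uL^j_{\alpha\beta}}$, computes the Gram matrix (which is block diagonal, with blocks equal to Gram matrices of rows of the flattenings, hence invertible), and then checks that the only second-order derivatives paired with nonzero entries of $G_T^{-1}$ lie in $V_0$ and are therefore tangent. Your argument reaches the same punchline --- that everything relevant to $H_T$ is confined to $V_0$, which is tangent because the rank-$\mathbf{r}$ locus is open in $V_0$ --- but gets there by equivariance instead of computation: since the stabilizer $K=O(W_1^\perp)\times\cdots\times O(W_d^\perp)$ acts by ambient isometries preserving $\mathcal{T}_{d,\mathbf{r}}$ and fixing $T$, the vector $H_T$ must lie in the $K$-fixed subspace, which your representation-theoretic step (correctly, e.g.\ via $-\mathbb{1}\in O(m)$ for $m\geq 1$ and multiplicativity of invariants under external tensor products) identifies with $V_0$; being simultaneously tangent and normal, $H_T$ vanishes. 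This symmetry argument is shorter and avoids the Gram matrix and second fundamental form entirely, at the cost of yielding less by-product information (the paper's computation also exhibits an explicit tangent basis and the metric in terms of flattenings, which it reuses in Section 4 for the Segre variety). Both arguments handle the degenerate cases the same way, and I see no gap in yours.
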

Minimal submanifolds are mathematical models of soap films: they minimize the volume locally around every point. When $d=2$ the manifold $\mathcal{T}_{2,\mathbf{r}}$, $\mathbf{r}=(r,r)$, consists of $n_1\times n_2$ matrices of rank $r$. Its minimality was recently proved in \cite{bordemann2020minimality, kozhasov2020minimality} and for $n_1=n_2=r+1$ earlier in \cite{Tka}. Thus Theorem \ref{main} generalizes this property from matrices to higher order tensors.

The case $\mathbf{r}=(1,\dots,1)$ is of particular interest, since the elements of $\mathcal{T}_{d,(1,\dots,1)}$ are precisely tensors of rank 1. The manifold $\mathcal{T}_{d,(1,\dots,1)}$ is the (affine) cone over the real part of the (projective) \emph{Segre variety}. In what follows we use the term ``Segre variety" when referring to the manifold $\mathcal{T}_{d,(1,\dots,1)}$ and denote it for simplicity by $\mathcal{S}_d$. If we slice $\mathcal{T}_{d,(1,\dots,1)}$ with the affine hyperplane $A$ containing the tensors with sum of coordinates equal to 1 and consider its nonnegative part, we obtain a \emph{statistical model} $\mathcal{M}$ which parametrizes $d$-tuples of independent discrete random variables $(X_1,\dots,X_d)$. The tensor $T=(t_{i_1\dots i_d})$ expresses the joint probability $t_{i_1\dots i_d}=\mathbb{P}(X_1=i_1,\dots,X_d=i_d)$, and the condition of having rank 1 corresponds to statistical independence of $X_1,\dots, X_d$. The popular Wasserstein distance between two probability distributions arises from optimal transport, measuring how much work is required to ``move'' one distribution to the other. In this discrete setting, the Wasserstein distance between a distribution $T\in A$ and $\mathcal{M}$ is the smallest scaling factor for which a certain polyhedron centered at $T$ intersects $\mathcal{M}$. The closest points on $\mathcal{M}$ to $T$ are those in the intersection. 
In \cite[Section 6]{wasserstein}~it was  experimentally observed that the optimal solution to this problem is never attained in the relative interior of a face of maximal dimension. One possible explanation for this experimental observation would be that the restriction of a linear functional to $\mathcal{M}$ attains no local minima (respectively maxima) in its relative interior. Corollary \ref{cor:M} below shows that it is indeed the case. This in turn follows from the following related result for the Segre variety $\mathcal{S}_d=\mathcal{T}_{d,(1,\dots,1)}$. In the statement of this theorem, $\vec{\ell}$ is a vector in $\R^{n_1}\otimes\cdots\otimes\R^{n_d}$ representing a linear functional.

\begin{theorem}\label{Theorem:Linear}
Let $T\in \mathcal{S}_{d}$ and let $P_{\vec{\ell}}=\{T+V\,:\,\langle \vec{\ell},V\rangle = 0\}\subset \R^{n_1}\otimes\cdots\otimes\R^{n_d}$ be an affine hyperplane containing the tangent space to $\mathcal{S}_{d}$ at $T$. Then no neighborhood of $T$ in $\mathcal{S}_{d}$ is completely contained in one of the half spaces $P_{\vec{\ell}}^\pm = \{ T+V\,:\,\pm\langle \vec{\ell},V\rangle\geq 0\}$. In particular, the linear functional $L_{\vec{\ell}}(S) =\langle \vec{\ell},S\rangle$ does not attain a local maximum or minimum on $\mathcal{S}_{d}$. 
\end{theorem}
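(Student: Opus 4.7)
My plan is an order-of-vanishing calculation along a rank-one curve through $T$, combined with a sign flip. Write $T = v_1 \otimes \cdots \otimes v_d$ with each $v_k \neq 0$, and introduce the multilinear form
\[
L(u_1, \ldots, u_d) := \langle \vec{\ell}, u_1 \otimes \cdots \otimes u_d \rangle.
\]
Since $T_T \mathcal{S}_d$ is spanned by tensors of the form $v_1 \otimes \cdots \otimes w_k \otimes \cdots \otimes v_d$, the hypothesis $P_{\vec{\ell}} \supseteq T + T_T\mathcal{S}_d$ is equivalent to $L(v_1, \ldots, w_k, \ldots, v_d) = 0$ for every $k$ and every $w_k$; taking $w_k = v_k$ also yields $\langle \vec{\ell}, T \rangle = 0$ (reflecting that $\mathcal{S}_d$ is a cone).

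For any $A \subseteq \{1, \ldots, d\}$ and any $w = (w_k)_{k \in A}$ with $w_k \in \R^{n_k}$, write $L_A(w;v)$ for the value of $L$ with $w_k$ in slot $k \in A$ and $v_k$ in the remaining slots. The previous paragraph shows $L_A(w;v) = 0$ whenever $|A| \leq 1$. Since $\vec{\ell} \neq 0$, there must exist some $A$ and some $w$ with $L_A(w;v) \neq 0$ (otherwise multilinear expansion forces $L \equiv 0$). Let $m$ be the minimum such $|A|$, and fix an achieving subset $A_*$ of size $m \geq 2$ together with vectors $(w_k^*)_{k \in A_*}$ giving $L_{A_*}(w^*;v) \neq 0$. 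Extend by $w_k^* := 0$ for $k \notin A_*$, and consider the curve
\[
\gamma(t) := \bigotimes_{k=1}^d (v_k + t w_k^*) \in \mathcal{S}_d,
\]
well-defined and rank-one for all sufficiently small $|t|$.

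Expanding multilinearly,
\[
\langle \vec{\ell}, \gamma(t) - T \rangle \;=\; \sum_{\emptyset \neq A' \subseteq \{1,\ldots,d\}} t^{|A'|}\, L_{A'}(w^*; v).
\]
Terms with $A' \not\subseteq A_*$ vanish because $w_k^* = 0$ for some $k \in A'$, and terms with $A' \subsetneq A_*$ vanish by the minimality of $m$, so only the $A' = A_*$ term survives:
\[
\langle \vec{\ell}, \gamma(t) - T \rangle \;=\; t^m\, L_{A_*}(w^*; v).
\]
If $m$ is odd, the signs of $t$ and $-t$ already realize both half-spaces. If $m$ is even, I replace $w_{k_0}^*$ by $-w_{k_0}^*$ for any fixed $k_0 \in A_*$: by multilinearity $L_{A_*}$ flips sign, and the resulting curve still lies in $\mathcal{S}_d$ and approaches $T$, landing in the opposite half-space. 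Either way, every neighborhood of $T$ in $\mathcal{S}_d$ meets both $P_{\vec{\ell}}^{\pm}$. The ``in particular'' clause is then immediate: at any local extremum of $L_{\vec{\ell}}$ restricted to $\mathcal{S}_d$ the gradient vanishes, which means $\vec{\ell}$ is normal to the tangent space there --- precisely the situation just excluded.

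I expect the bookkeeping to be the main (mild) obstacle: one must verify carefully that the specific choice of $w^*$ supported on $A_*$ simultaneously (i) annihilates every $A' \not\subseteq A_*$ term through the presence of a zero factor and (ii) lets the minimality of $m$ kill every $A' \subsetneq A_*$ contribution. Once this is in place, the sign flip in the even case and the corollary about extrema follow without additional work.
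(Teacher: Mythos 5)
Your argument is correct, and it reaches the same structural insight as the paper --- namely that the obstruction lives in the ``degree $\geq 2$'' part of $\vec{\ell}$ and that the lowest nonvanishing degree $m\geq 2$ governs the local sign of $\langle\vec{\ell},\gamma(t)-T\rangle$ --- but the execution is genuinely different and in some respects cleaner. The paper first uses the transitive $O(n_1)\times\cdots\times O(n_d)$ action to normalize $T=e^1_1\otimes\cdots\otimes e^d_1$, decomposes the normal space as $N_2\oplus^{\perp}\cdots\oplus^{\perp} N_d$ according to how many factors differ from $e^j_1$, and then builds curves from one-parameter subgroups $e^{uL^j_{1,\alpha_j}}e^j_1$, computing $\gamma^{(j)}(0)$ via Leibniz's rule and concluding through a Taylor expansion whose higher-order terms must be controlled. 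You instead work at an arbitrary $T=v_1\otimes\cdots\otimes v_d$ with straight-line perturbations $v_k+tw_k^*$, and your grading by subsets $A\subseteq[d]$ (the analogue of the $N_k$) turns the whole computation into an exact polynomial identity $\langle\vec{\ell},\gamma(t)-T\rangle=t^m L_{A_*}(w^*;v)$ with no remainder to estimate, no group-action reduction, and no need for the orthogonality of the graded pieces. The sign flip in one factor for even $m$ is exactly the paper's device of negating the parameter in one tensor slot. The one place to be careful --- that every $A'\neq A_*$ term dies either because $A'\not\subseteq A_*$ forces a zero factor or because $|A'|<m$ forces $L_{A'}\equiv 0$, and that these two cases exhaust all $A'\neq A_*$ since $|A_*|=m$ --- is handled correctly in your writeup. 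Both approaches rely on the same two facts: the tangent-space hypothesis kills all contributions of degree $\leq 1$, and nondegeneracy of $\vec{\ell}$ guarantees a surviving term of some degree $m\geq 2$ that can be isolated along a suitable rank-one curve.
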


A minimal surface in $\R^3$ with non-degenerate second fundamental form has saddle-like shape locally around every point. 
In particular, no linear functional can be minimized or maximized on such a surface. 
An analogous fact is true for minimal submanifolds in $\R^n$ satisfying some non-degeneracy conditions, see Remark \ref{rem:non-degenerate}. 
In view of this and the minimality of $\mathcal{T}_{d,\mathbf{r}}$ (see Theorem \ref{main}), the assertion of Theorem \ref{Theorem:Linear} is anticipated. However to prove it, in Section \ref{sec:Linear} we have to deal with finer geometric information of $\mathcal{S}_d$ than just its mean curvature. 
\begin{corollary}\label{cor:A}
    Let $A$ be an affine hyperplane in $\R^{n_1}\otimes\dots\otimes\R^{n_d}$, and let $L:\R^{n_1}\otimes\dots\otimes\R^{n_d}\to \mathbb{R}$ be a linear functional not constant on $A$. \color{black} Then the restriction of $L$ to $\mathcal{S}_d\cap A$ has no local minima (respectively maxima).
\end{corollary}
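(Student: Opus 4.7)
The plan is to derive the corollary from Theorem~\ref{Theorem:Linear} via a Lagrange multiplier argument. Suppose for contradiction that $T \in \mathcal{S}_d \cap A$ is a local minimum of $L$ on $\mathcal{S}_d \cap A$; the maximum case follows by replacing $L$ with $-L$. Write $L(S) = \langle \vec{\ell}', S\rangle$ and $A = \{S : \langle \vec{a}, S\rangle = c\}$. The assumption that $L$ is not constant on $A$ is precisely that $\vec{\ell}'$ is not a scalar multiple of $\vec{a}$.

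First I would construct the normal vector to plug into Theorem~\ref{Theorem:Linear}. Since $T$ is critical for $L|_{\mathcal{S}_d\cap A}$, the functional $\vec{\ell}'$ annihilates the tangent space $T_T(\mathcal{S}_d\cap A)= T_T\mathcal{S}_d\cap T_T A$. In the transverse case $T_T\mathcal{S}_d\not\subseteq T_T A$, a unique Lagrange multiplier $\mu\in\R$ makes $\vec{\ell}:=\vec{\ell}'-\mu\vec{a}$ orthogonal to all of $T_T\mathcal{S}_d$; in the tangent case $T_T\mathcal{S}_d\subseteq T_T A$ we take $\mu=0$ and note that $\vec{\ell}'\in N_T\mathcal{S}_d$ automatically. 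In both cases $\vec{\ell}\in N_T\mathcal{S}_d\setminus\{0\}$, for $\vec{\ell}=0$ would force $\vec{\ell}'=\mu\vec{a}$. Hence $P_{\vec{\ell}}$ contains $T+T_T\mathcal{S}_d$, and Theorem~\ref{Theorem:Linear} applies: every neighborhood of $T$ in $\mathcal{S}_d$ contains a point $S$ with $\langle\vec{\ell},S-T\rangle<0$. The key identity is that for any $S\in A$ one has $\langle\vec{\ell},S-T\rangle = \langle\vec{\ell}'-\mu\vec{a},S-T\rangle = L(S)-L(T)$, so it would suffice to locate such $S$ inside $A$ to contradict local minimality.

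The hard part is therefore producing the sign-changing points inside $A$, not merely in $\mathcal{S}_d$. In the transverse case I would use the implicit function theorem to realize $\mathcal{S}_d\cap A$ as a smooth codimension-one submanifold of $\mathcal{S}_d$ near $T$ and argue that the perturbations supplied by the proof of Theorem~\ref{Theorem:Linear} can be chosen inside this submanifold. Concretely, working in the multilinear parametrization $(u_1,\dots,u_d)\mapsto u_1\otimes\cdots\otimes u_d$ of $\mathcal{S}_d$ near $T=v_1\otimes\cdots\otimes v_d$, the function $\langle\vec{\ell},u_1\otimes\cdots\otimes u_d-T\rangle$ expands as a polynomial in the perturbations $a_j:=u_j-v_j$ whose linear part vanishes (because $\vec{\ell}\in N_T\mathcal{S}_d$); the heart of Theorem~\ref{Theorem:Linear} is that this polynomial takes both signs, and for the corollary one must verify that the leading form remains indefinite when restricted to the linearized constraint $\sum_j\langle\vec{a},v_1\otimes\cdots\otimes a_j\otimes\cdots\otimes v_d\rangle=0$ coming from $\langle\vec{a},u_1\otimes\cdots\otimes u_d\rangle=c$. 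This is where the specific multilinear structure of $\mathcal{S}_d$ (beyond mere minimality) enters. The degenerate tangent case $T_T\mathcal{S}_d\subseteq T_T A$ is easier: there $\vec{a}\in N_T\mathcal{S}_d$, the constraint vanishes to first order along every tangent direction so that curves in $\mathcal{S}_d$ meet $A$ only to quadratic order, and by continuity one can correct the sign-changing curves of Theorem~\ref{Theorem:Linear} back into $A$ without destroying the sign of $L-L(T)$.
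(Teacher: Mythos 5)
Your first paragraph is sound and matches the paper's setup: at a putative local extremum $T$ of $L_{\vec{\ell}'}$ on $\mathcal{S}_d\cap A$ one adjusts by a multiple of $\vec{a}$ to get $\vec{v}=\vec{\ell}'-\mu\vec{a}\in N_T\mathcal{S}_d\setminus\{0\}$, and $\langle\vec{v},S-T\rangle=L(S)-L(T)$ for all $S\in A$. But the step you yourself identify as ``the hard part'' --- producing the sign-changing points of \Cref{Theorem:Linear} \emph{inside} $\mathcal{S}_d\cap A$ --- is exactly where the proof stops being a proof. You reduce it to verifying that ``the leading form remains indefinite when restricted to the linearized constraint,'' but you give no argument for this, and it is not a routine verification: the leading form in the proof of \Cref{Theorem:Linear} is a homogeneous form of some degree $k\geq 2$ (not generally quadratic), the curves $\gamma^{\pm}$ built there are rigid one-parameter constructions with no reason to respect the constraint $\langle\vec{a},\cdot\rangle=c$, and sign-changing behavior of a homogeneous form is in general destroyed by restriction to a hyperplane (e.g.\ $x^2-y^2$ on $x=y$). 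In effect your plan requires reproving a constrained version of \Cref{Theorem:Linear}, and the treatment of the tangential case ($T_T\mathcal{S}_d\subseteq T_TA$) by ``correcting the curves back into $A$ by continuity'' is likewise only asserted. Note also that $\mathcal{S}_2\cap A$ fails to be minimal (see \Cref{fig:vector-field}), so the second-order geometry genuinely changes under slicing; one cannot expect the unconstrained argument to restrict for free.

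The paper avoids all of this with a one-line trick you are missing: $\mathcal{S}_d$ is a cone. Choosing $\mu$ so that additionally $\langle\vec{v},T\rangle=0$ (possible since $\vec{\ell}'$ is not proportional to $\vec{a}$; automatic when $c=0$), any $S\in\mathcal{S}_d$ near $T$ can be rescaled to $\lambda S\in\mathcal{S}_d\cap A$ with $\lambda=c/\langle\vec{a},S\rangle>0$ close to $1$, and then $L_{\vec{v}}(S)=\lambda^{-1}L_{\vec{v}}(\lambda S)$ has the same sign as $L_{\vec{v}}(\lambda S)-L_{\vec{v}}(T)$ because $L_{\vec{v}}(T)=0$. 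Thus a local extremum of $L_{\vec{\ell}'}$ on the slice $\mathcal{S}_d\cap A$ is promoted to a local extremum of $L_{\vec{v}}$ on all of $\mathcal{S}_d$, and \Cref{Theorem:Linear} is contradicted directly --- the sign-changing points never need to be relocated into $A$. I recommend you replace your second and third paragraphs with this rescaling argument; without it, or without an actual proof of your constrained indefiniteness claim, the argument is incomplete.
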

If we take $A$ to be the affine hyperplane of $\R^{n_1}\otimes\dots\otimes\R^{n_d}$ consisting of tensors with entries summing up to $1$, we  obtain the following application in statistics.
\begin{corollary}\label{cor:M}
    Let $\mathcal{M}$ be the statistical model of $d$ independent discrete random variables. Then any linear functional which is not constant on $\mathcal{M}$ has no local minimum (or maximum) on the relative interior of~$\mathcal{M}$. 
\end{corollary}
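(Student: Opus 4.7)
The plan is to reduce Corollary~\ref{cor:M} to Corollary~\ref{cor:A} applied to the specific affine hyperplane $A = \{T\in\R^{n_1}\otimes\cdots\otimes\R^{n_d}:\sum t_{i_1\dots i_d}=1\}$. First I would record the set-theoretic identity $\mathcal{M}=\mathcal{S}_d\cap A\cap \R^{n_1\cdots n_d}_{\geq 0}$, which holds because the joint distributions of independent discrete random variables are exactly the rank-one tensors with nonnegative entries summing to one (the rank-one factorization recovers the marginals, and conversely tensoring probability vectors produces such a tensor).

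Next I would identify the relative interior of $\mathcal{M}$ with the strictly positive stratum of $\mathcal{M}$. A strictly positive rank-one tensor of entry sum one factors uniquely as a product of positive probability vectors (recovered from the marginals), so the Segre parametrization restricts to a diffeomorphism from the product of open simplices $\mathring{\Delta}^{n_1-1}\times\cdots\times\mathring{\Delta}^{n_d-1}$ onto this positive stratum. The essential observation is that strict positivity is an open condition in $A$: any tensor in $\mathcal{S}_d\cap A$ close enough to a strictly positive tensor $T_0\in \mathcal{M}$ still has all entries positive, and therefore still lies in $\mathcal{M}$. Hence $\mathcal{M}$ and $\mathcal{S}_d\cap A$ coincide as sets in a neighborhood of every relative interior point of $\mathcal{M}$.

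Finally I would invoke Corollary~\ref{cor:A}. If a linear functional $L$ is not constant on $\mathcal{M}$, then it cannot be constant on the larger affine set $A$ either, so the hypothesis of Corollary~\ref{cor:A} is satisfied and $L$ admits no local extrema on $\mathcal{S}_d\cap A$. Combined with the local equality $\mathcal{M}=\mathcal{S}_d\cap A$ near relative interior points established above, this immediately excludes any local minimum or maximum of $L$ in the relative interior of $\mathcal{M}$.

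I do not expect any genuine obstacle in this argument: all the geometric content has already been packaged into Theorem~\ref{Theorem:Linear} and Corollary~\ref{cor:A}. The only item that deserves care is the verification that the relative interior of $\mathcal{M}$ is a locally open subset of the ambient manifold $\mathcal{S}_d\cap A$ rather than a strict semialgebraic subset, and this follows directly from the openness of positivity together with the uniqueness of the positive Segre factorization.
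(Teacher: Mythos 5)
Your proposal is correct and follows exactly the route the paper intends: the paper derives Corollary~\ref{cor:M} from Corollary~\ref{cor:A} by taking $A$ to be the hyperplane of tensors with entry sum $1$, leaving the details implicit. Your additional verification that strict positivity is open in $A$, so that $\mathcal{M}$ and $\mathcal{S}_d\cap A$ coincide near every relative interior point, is precisely the (unstated) step needed to make that reduction rigorous.
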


\color{black}



\section{Preliminaries}\label{section:preliminaries}
	
This section reviews basic concepts we will need. For more on tensors and tensor decompositions, see \cite{Landsberg2012TensorsGeometryApplicationsTEXT}, while for differential geometry see \cite{Chavel2006RiemannianGeometry, KN1969}. We first recall a definition of the mean curvature vector field of a submanifold of an Euclidean space, see \cite[Ch.\text{\MakeUppercase{ \romannumeral 7}}]{KN1969} for more details. 
	
Let $(\R^n,\langle\cdot,\cdot\rangle)$ be an Euclidean space and let $M\subset \R^n$ be a smooth $m$-dimensional submanifold.  Consider a local parametrization $r: U\rightarrow M$ of $M$, where $U$ is an open subset of $\R^m$. The first order partial derivatives $\partial_{u_1}r(u),\dots,\partial_{u_m} r(u)$ form a basis of the tangent space $T_{r(u)} M$ of $M$ at $r(u)$ and we denote by $G_{r(u)}=\left(\langle \partial_{u_i} r(u),\partial_{u_j} r(u)\rangle\right)$ the \emph{Gram matrix} of the metric of $M$ with respect to this basis. Thus, $G_{r(u)}$, $u\in U$, is a smooth field of positive definite matrices along $r(U)\subset M$.  
\emph{The second fundamental form of $M$} is a symmetric bilinear form $b$ on the tangent bundle $TM$ to $M$ with values in the normal bundle $(TM)^{\perp}\subset \R^n$ to $M$ defined at a point $r(u)\in M$ via
\begin{align}\label{defn:b-the-normal-vector-valued-bilinear-form}
b(u,v) = \sum_{i,j=1}^m u^iv^j\left(\partial^{\,2}_{u_i u_j}r(u)\right)^\perp,
\end{align}
where $u=\sum_{i=1}^m u^i\partial_{u_i}r(u)$, $v=\sum_{j=1}^m v^j\partial_{u_j} r(u)\in T_{r(u)}M$ are arbitrary tangent vectors and $\left(\partial^{\,2}_{u_i u_j} r(u)\right)^\perp \in (T_{r(u)}M)^\perp$ is the \emph{normal component} of the vector $\partial^{\,2}_{u_i u_j} r(u)\in \R^n$. The \emph{mean curvature vector} of $M$ at $r(u)\in M$ is defined to be
\begin{align}\label{mean_1}
    H_{r(u)} = \sum_{i,j=1}^m \left(G^{-1}_{r(u)}\right)_{ij}  b\left(\partial_{u_i} r(u),\partial_{u_j} r(u)\right) = \sum_{i,j=1}^m \left(G^{-1}_{r(u)}\right)_{ij} \left(\partial^{\,2}_{u_iu_j} r(u)\right)^\perp, 
\end{align}
where $G^{-1}_{r(u)}$ denotes the inverse to the positive definite matrix $G_{r(u)}$. The definition depends only on the embedding $M\subset V$ and not on the choice of the local parametrization. The smooth field $H_{r(u)}$, $u\in U$, of normal vectors is the \emph{mean curvature vector field} of $M$ along the open set $r(U)\subset M$. By gluing ``local'' definitions of the mean curvature vector field along open sets $r(U)$ from an open cover of $M$ one obtains the smooth field of normal vectors called the \emph{mean curvature vector field} of $M$. It will be convenient to adapt the following ``formal'' writing of \eqref{mean_1}
\begin{align}\label{eq: H_formal}
H_{r(u)} = \mathrm{Tr}\left (G^{-1}_{r(u)}\cdot \left(\mathrm{d}^{\,2} r(u)\right)^\perp\right),    
\end{align}
where $\mathrm{d}^{\,2} r(u)$ is the symmetric matrix whose $(i,j)$-th entry is the vector $\partial^{\,2}_{u_i u_j} r(u)$ and $(\cdot )^\perp$ is applied entry-wise. 

A submanifold $M\subset \R^n$ of the Euclidean space $(\R^n,\langle\cdot,\cdot,\rangle)$ is called \emph{minimal} if the mean curvature vector field vanishes. Thus, in order to prove minimality of $M\subset \R^n$ one can show that for each point $x\in M$ there is a local parametrization $r: U\rightarrow M$ with $x=r(u)$ and such that $H_{r(u)}=0$. 

Next, we state the definition of multilinear rank of a tensor in terms of ranks of its flattening matrices.

For a tensor $T=(t_{i_1\dots i_d}) \in \R^{n_1}\otimes \cdots\otimes \R^{n_d}$ and an index $j\in [d]=\{1,\dots,d\}$ a \emph{$j$-th flattening} of $T$ is the $n_j\times \prod_{k\neq j} n_k$ matrix whose rows are indexed by $i_j\in [n_j]$, columns are indexed by \\ $(d-1)$-tuples~$(i_1,\dots,i_{j-1},i_{j+1},\dots, i_d)\in [n_1]\times \cdots \times [n_{j-1}]\times [n_{j+1}]\times \cdots\times [n_d]$ (ordered in an arbitrary but a priori fixed way) and its $(i_j,(i_1,\dots,i_{j-1}, i_{j+1},\dots, i_d))$-th entry equals $t_{i_1\dots i_{j-1} i_j i_{j+1}\dots i_d}$. It is well known that $T$ has multilinear rank $\mathbf{r}=(r_1,\dots,r_d)$ if and only if for $j\in [d]$ the $j$-th flattening matrix of $T$ has rank $r_j$, see \cite[Theorem $6.13$]{Hackbusch}. 
Finally, note that the dot products between the rows of the $j$-th flattening of $T$ with indices $\lambda, \lambda'\in [n_j]$ equals
\begin{align}\label{eq:flat}
    \sum_{k\in [d]\setminus j} \sum_{i_k=1}^{n_k} t_{i_1\dots i_{j-1} \lambda i_{j+1}\dots i_d}\,t_{i_1\dots i_{j-1} \lambda' i_{j+1}\dots i_d}.
\end{align}
\section{Proof of main theorem}\label{section:main-theorem}

In this section we prove the main Theorem \ref{main}. The proof outline is as follows: First, we use a transitive action of a product of orthogonal groups to give a local parametrization of the manifold $\mathcal{T}_{d,\mathbf{r}}$ at a point. We then use this parametrization to compute the Gram matrix, and observe its block-diagonal structure with blocks that are themselves Gram matrices of the rows of the various flattenings of the tensor. After using the block structure to show the vanishing of most terms contributing to the mean curvature, we observe that the only remaining terms are zero upon projecting to the normal space, completing the proof.\\

		\textit{Proof of Theorem \ref{main}.} The product of orthogonal groups $O(\mathbf{n})=O(n_1)\times \cdots \times O(n_d)$ acts on $\R^{n_1}\otimes \cdots \otimes \R^{n_d}$. More explicitly, $\mathbf{g}=(g^1,\dots,g^d)\in O(\mathbf{n})$ acts on the tensor $T=(t_{i_1\dots i_d})\in \R^{n_1}\otimes \dots\otimes \R^{n_d}$ as
		\begin{align}\label{eq:action}
            \mathbf{g}^*T=\left(\sum_{j=1}^d \sum_{k_j=1}^{n_j} g^1_{i_1k_1}\dots g^d_{i_dk_d} t_{k_1\dots k_d}\right),
		\end{align}
		preserving the Frobenius inner product \eqref{Frob}. This action also preserves the multilinear rank and hence restricts to an action on manifolds $\mathcal{T}_{d,\mathbf{r}}$. In the following, we fix an orthonormal basis $\{e^j_1,\dots,e^j_{n_j}\}$ of $\R^{n_j}$. In view of the above we have $H_T=0$ if and only if $H_{\mathbf{g}^*T} = 0$ for any $\mathbf{g}\in O(\mathbf{n})$. In particular, since the orthogonal group $O(n_j)$ acts transitively on the Grassmannian $Gr(r_j,n_j)$ of $r_j$-planes in $\R^{n_j}$, to prove that the mean curvature vector $H_T$ at $T\in \mathcal{T}_{d,\mathbf{r}}$ is zero we can first assume that $T\in W_1\otimes\dots\otimes W_d$ with $W_j=\mathrm{Span}\{e^j_1,\dots,e^j_{r_j}\}$, \[
			T ~ = ~\sum_{j=1}^d\sum_{i_j=1}^{r_j} t_{i_1\dots i_d} ~e^1_{i_1}\otimes \cdots \otimes e^d_{i_d}.
		\]
		For $1\leq \alpha,\beta\leq n_j$ let $E^j_{\alpha\beta}$ be the $(\alpha,\beta)$-th matrix unit of size $n_j\times n_j$,
		\begin{align*}
		    \left(E^j_{\alpha\beta}\right)_{\alpha^\prime\beta^\prime} = \begin{cases}
		    \ 1,& \mathrm{if}
		     \ \alpha=\alpha^\prime ~ \text{and} ~ \beta=\beta^\prime\\
		    \ 0,& \mathrm{otherwise}
		    \end{cases},
		\end{align*}
		and let $L^j_{\alpha\beta} = E^j_{\beta\alpha}-E^j_{\alpha\beta}$. Skew-symmetric matrices $L^j_{\alpha\beta}$, $1\leq \alpha<\beta\leq n_j$, form a basis of the tangent space $T_{\mathbb{1}}O(n_j)$ to the orthogonal group at the identity matrix $\mathbb{1}\in O(n_j)$. 
		For $L\in T_{\mathbb{1}}O(n_j)$ let $u\mapsto e^{u L}$ be the one-parameter subgroup of orthogonal matrices such that $e^{0L} = \mathbb{1}$ and $\frac{d}{du}e^{uL} = L e^{uL} =e^{uL} L$ and define a family of matrices in $O(n_j)$ via
		\begin{align}\label{eq: g}
		    g(\mathbf{u}^j)=  \prod_{\alpha = 1}^{r_j}\prod_{\beta=r_j+1}^{n_j} e^{u^j_{\alpha\beta}L^j_{\alpha\beta}},\quad \mathbf{u}^j=(u^j_{\alpha\beta}),
		\end{align}
		where orthogonal matrices $e^{u^j_{\alpha\beta} L^j_{\alpha\beta}}$ in the product are ordered according to the order 
		\begin{align}\label{eq: order}
		    (1,r_j+1), (2,r_j+1),\dots, (r_j,r_j+1),(1,r_j+2),\dots, (r_j,r_j+2), \dots,(1,n_j),\dots,(r_j,n_j)
		\end{align}
		on the set of pairs $(\alpha,\beta)$.
	Note that the associated family $g(\mathbf{u}^j)W_j\subset Gr(r_j,n_j)$ of $r_j$-planes contains a neighborhood of $W_j\in Gr(r_j,n_j)$. This follows from the transitivity of the action of the orthogonal group on the Grassmannian. We will need formulas for partial derivatives of \eqref{eq: g}. By Leibniz's rule we obtain for $1\leq j\leq d$ and $1\leq \lambda\leq r_j$, $r_j+1\leq \mu \leq n_j$
	\small\begin{align*}
	    \frac{\partial g}{\partial u^j_{\lambda\mu}}(\mathbf{u}^j) ~ = ~ \left(\prod_{\alpha=1}^{r_j}\prod_{\beta=r_j+1}^{\mu-1} e^{u^j_{\alpha\beta} L^j_{\alpha\beta}}\right) \left(\prod_{\alpha=1}^{\lambda-1} e^{u^j_{\alpha\mu} L^j_{\alpha\mu}}\right)  e^{u^j_{\lambda\mu}L^j_{\lambda\mu}} L^j_{\lambda\mu} \left(\prod_{\alpha=\lambda+1}^{r_j}  e^{u^j_{\alpha\mu} L^j_{\alpha\mu}}\right)\left( \prod_{\alpha=1}^{r_j}\prod_{\beta=\mu+1}^{n_j} e^{u^j_{\alpha\beta} L^j_{\alpha\beta}}\right).
	\end{align*}\normalsize
	Evaluation of the last expression at $\mathbf{u}^j=\mathbf{0}$ gives
	\begin{align}\label{eq: dg at 0}
	    \frac{\partial g}{\partial u^j_{\lambda\mu}}(\mathbf{0}) = L^j_{\lambda\mu}.
	\end{align}
	Now, the second order partial derivatives of \eqref{eq: g} evaluated at $\mathbf{u}^j= \mathbf{0}$ equal
	\begin{align}\label{eq d2 g at 0}
	    \frac{\partial^2 g}{\partial u^{j}_{\lambda\mu}\partial u^j_{\lambda^\prime\mu^\prime}}(\mathbf{0}) = L^j_{\lambda\mu}L^j_{\lambda^\prime\mu^\prime} = -\delta_{\mu\mu'}E^j_{\lambda \lambda'}-\delta_{\lambda\lambda'}E^j_{\mu\mu'},
	\end{align}
	where $(\lambda,\mu)\leq (\lambda^\prime,\mu^\prime)$ with respect to the above defined order. 
	Next we define a local parametrization of $\mathcal{T}_{d,\mathbf{r}}$ around $T=(t_{i_1\dots i_d})$ by
		\begin{equation}\label{eq: nbhd}
			T(\mathbf{u}^1,\dots,\mathbf{u}^d,S)~ = ~ \sum_{j=1}^d\sum_{i_j=1}^{r_j} (t_{i_1\dots i_d}+s_{i_1\dots i_d}) ~g(\mathbf{u}^1)e^1_{i_1}\otimes \cdots \otimes g(\mathbf{u}^d)e^d_{i_d},
		\end{equation}
		where $\mathbf{u}^j\in \R^{r_j(n_j-r_j)}$, $j\in [d]$, and $S=(s_{i_1\dots i_d})\in \R^{r_1}\otimes \cdots \otimes\R^{r_d}$ together constitute a family of parameters of dimension $\sum_{j=1}^d r_j(n_j-r_j) + \prod_{j=1}^d r_j=\dim(\mathcal{T}_{d,\mathbf{r}})$.

Note that at $\mathbf{0} = (\mathbf{0},\dots,\mathbf{0},\mathbf{0})=(\mathbf{u}^1,\dots,\mathbf{u}^d,S)$ we have $T(\mathbf{0})=T$. 
The first order partial derivatives of \eqref{eq: nbhd} evaluated at $\mathbf{0}$ equal
\begin{align}\label{eq: dTs}
    \frac{\partial T}{\partial s_{i_1\dots i_d}}(\mathbf{0}) &= e^1_{i_1}\otimes \dots\otimes e^d_{i_d}
\end{align}
and 
\begin{equation}\label{eq: dTu}
\begin{aligned}\frac{\partial T}{\partial u^j_{\lambda\mu}}(\mathbf{0}) &= \sum_{k=1}^d \sum_{i_k=1}^{r_k} t_{i_1\dots i_d}\, e^1_{i_1}\otimes \dots\otimes e^{j-1}_{i_{j-1}}\otimes L^j_{\lambda\mu} e^j_{i_j}\otimes e^{j+1}_{i_{j+1}}\otimes \dots\otimes e^d_{i_d}\\
&= \sum_{k \in [d]\setminus j} \sum_{i_k=1}^{r_k} t_{i_1\dots i_{j-1}\lambda\, i_{j+1}\dots i_d}\, e^1_{i_1}\otimes \dots\otimes e^{j-1}_{i_{j-1}}\otimes e^j_{\mu}\otimes e^{j+1}_{i_{j+1}}\otimes \dots\otimes e^d_{i_d},  
\end{aligned}\end{equation}
where in \eqref{eq: dTu} we use \eqref{eq: dg at 0} and the formula 
\begin{align*}
    L^j_{\lambda\mu}e^j_{i_j} = 
    \begin{cases}
    \ e^j_\mu,& \mathrm{if}\ i_j=\lambda\\
    \ -e^j_\lambda,& \mathrm{if}\ i_j = \mu\\
    \ 0,&\mathrm{otherwise}
    \end{cases}.
\end{align*}
Note that $\lambda \in \{1,2,\dots,r_j\}$, $\mu \in \{r_j+1,\dots,n_j\}$, and the sum in \eqref{eq: dTu} in the $i_j$-th index runs over $i_j \in \{1,2,\dots,r_j\}$ so that $i_j = \lambda$ is the only relevant case.
It immediately follows from \eqref{eq: dTs} and \eqref{eq: dTu} that
\begin{align}\label{eq: inner_s_with_s}
			\left\langle \frac{\partial T}{\partial s_{i_1\dots i_d}}(\mathbf{0}), \frac{\partial T}{\partial s_{j_1\dots j_d}}(\mathbf{0})\right\rangle  ~ &= ~ \delta_{i_1j_1}\dots\delta_{i_dj_d}, \\
		\label{eq: scalar b and t}
			\left\langle  \frac{\partial T}{\partial s_{i_1\dots i_d}}(\mathbf{0}), \frac{\partial T}{\partial u^{j}_{\lambda\mu}}(\mathbf{0})\right\rangle ~ &= ~ 0, 
		\\\label{eq: whoknowswhattocallitnotme}
		\left\langle  \frac{\partial T}{\partial u^{j}_{\lambda\mu}}(\mathbf{0}), \frac{\partial T}{\partial u^{j'}_{\lambda'\mu'}}(\mathbf{0})\right\rangle ~ &= ~ \delta_{jj'}\delta_{\mu\mu'}\sum_{k\in[d]\setminus j} \sum_{i_k=1}^{r_k} t_{i_1\dots i_{j-1}\lambda i_{j+1} \dots i_d}\, t_{i_1\dots i_{j-1} \lambda' i_{j+1}\dots i_d},
		\end{align}
for every  $ (i_1,\dots,i_d),(j_1,\dots,j_d)\in [r_1]\times\dots\times [r_d]$, $j,j' \in [d]$, $(\lambda,\mu)\in [r_j]\times \{r_j+1,\dots,n_j\}$ and $(\lambda^\prime,\mu^\prime)\in [r_{j^\prime}]\times \{r_{j^\prime}+1,\dots,n_{j^\prime}\}$.
With these formulas on hand we compute the Gram matrix $G_{T=T(\mathbf{0})}$. The rows and columns of this symmetric $\dim(\mathcal{T}_{d,\mathbf{r}})\times \dim(\mathcal{T}_{d,\mathbf{r}})$ matrix are indexed by the local coordinates in the parametrization \eqref{eq: nbhd}, and each entry of $G_T$ is the inner product of the corresponding partial derivatives of \eqref{eq: nbhd} evaluated at $(\mathbf{u}^1,\dots,\mathbf{u}^d,S)=\mathbf{0}$. First, formulas \eqref{eq: inner_s_with_s}, \eqref{eq: scalar b and t} and \eqref{eq: whoknowswhattocallitnotme} imply that $G_{T}$ has a block diagonal structure, with the $d+1$ many blocks given as 
\begin{equation*}\label{eq: first Gram}
    \renewcommand\arraystretch{2.8}
    G_{T}=
    \begin{pmatrix}       \mbox{\normalfont\large$G_{\frac{\partial T}{\partial \mathbf{u}^1}(\mathbf{0})}$} & \vline & \mbox{\normalfont\Large 0} & \vline & \cdots & \vline & \mbox{\normalfont\Large 0} & \vline & \mbox{\normalfont\Large 0}\\ \hline
    \mbox{\normalfont\Large 0} & \vline & \mbox{\normalfont\large$G_{\frac{\partial T}{\partial \mathbf{u}^2}(\mathbf{0})}$} & \vline & \ddots & \vline & \ddots & \vline & \vdots\\ \hline
    \vdots & \vline & \ddots & \vline & \ddots & \vline & \ddots & \vline & \vdots \\ \hline
    \mbox{\normalfont\Large 0} & \vline & \ddots & \vline & \ddots & \vline & \mbox{\normalfont\large$G_{\frac{\partial T}{\partial \mathbf{u}^d}(\mathbf{0})}$} & \vline & \mbox{\normalfont\Large 0} \\ \hline 
    \mbox{\normalfont\Large 0} & \vline & \cdots & \vline & \cdots & \vline &
    \mbox{\normalfont\Large 0} & \vline &
    \mbox{\normalfont\large$G_{\frac{\partial T}{\partial S}(\mathbf{0})}$}
    \end{pmatrix}.
\end{equation*}
Entries of the block $G_{\frac{\partial T}{\partial \mathbf{u}^j}(\mathbf{0})}$, $j\in [d]$, are inner products \eqref{eq: whoknowswhattocallitnotme} with $j=j'$, whereas the last block $G_{\frac{\partial T}{\partial S}(\mathbf{0})}$ of inner products \eqref{eq: inner_s_with_s} is the $(\prod_{j=1}^d r_j\times \prod_{j=1}^d r_j)$ identity matrix. If we order the rows and columns of the block $G_{\frac{\partial T}{\partial \mathbf{u}^j}(\mathbf{0})}$ according to \eqref{eq: order},
then \eqref{eq: whoknowswhattocallitnotme} with $j=j'$ implies that $G_{\frac{\partial T}{\partial \mathbf{u}^j}(\mathbf{0})}$ has a further block structure,
\begin{equation*}\label{eq: second Gram}
    \renewcommand\arraystretch{2}
    G_{\frac{\partial T}{\partial \mathbf{u}^{j}}(\mathbf{0})} = \begin{pmatrix}
     \hspace{8pt}\mbox{\normalfont\Large $A_j$} & \vline & \mbox{\normalfont\Large 0} & \vline & \cdots & \vline & \mbox{\normalfont\Large 0}\hspace{8pt}\\ \hline
     \mbox{\normalfont\Large 0} & \vline & \mbox{\normalfont\Large $A_j$} & \vline & \ddots & \vline & \vdots \hspace{8pt} \\ \hline
     \vdots & \vline & \ddots & \vline & \ddots & \vline & \mbox{\normalfont\Large 0} \hspace{8pt} \\ \hline 
     \mbox{\normalfont\Large 0} & \vline & \cdots & \vline & \mbox{\normalfont\Large 0} & \vline & \mbox{\normalfont\Large $A_j$} \hspace{8pt}
    \end{pmatrix}.
\end{equation*}
More precisely, $G_{\frac{\partial T}{\partial\mathbf{u}^j}(\mathbf{0})}$ is a block diagonal $(r_j(n_j-r_j)\times r_j(n_j-r_j))$ matrix, with $n_j-r_j$ identical blocks $A_j$. It follows from \eqref{eq: whoknowswhattocallitnotme} with $j=j'$, $\mu=\mu'$  and \eqref{eq:flat} that the $r_j\times r_j$ matrix $A_j$ is the Gram matrix of the rows of the $j$-th flattening of the tensor $T$. Since the $j$-th entry $r_j$ in the multilinear rank $\mathbf{r}=(r_1,\dots,r_j,\dots,r_d)$ of $T\in \mathcal{T}_{d,\mathbf{r}}$ equals the rank of the $j$-th flattening (see Section \ref{section:preliminaries}), the matrix $A_j$ and hence $G_{\frac{\partial T}{\partial\mathbf{u}^j}(\mathbf{0})}$ are nonsingular. Repeating the same argument for every block shows that also $G_{T}$ is nonsingular and hence vectors \eqref{eq: dTs} and \eqref{eq: dTu} form a basis of the tangent space of $\mathcal{T}_{d,\mathbf{r}}$ at $T$.\\

The block diagonal structure of $G_T$ and of $G_{\frac{\partial T}{\partial \mathbf{u}^j} (\mathbf{0})}$, $j\in [d]$, shows that the inverse matrix $G_{T}^{\,-1}$ is also block diagonal, with blocks of the same size. In particular we have
\begin{equation*}\label{eq: inverse zero}
    (G_{T}^{\,-1})_{\alpha\beta} = 0,
\end{equation*}
in the following cases:
\begin{itemize}
    \item[-] $\alpha$ corresponds to a parameter $s_{i_1\dots i_d}$ and $\beta$ corresponds to a parameter $u^j_{\lambda\mu}$.
    \item[-] $\alpha$ corresponds to a parameter $u^{j}_{\lambda\mu}$ and $\beta$ corresponds to a parameter $u^{j'}_{\lambda'\mu'}$, with $j\neq j'$ or with $j=j'$ but $\mu\neq \mu'$.
\end{itemize}
Next we concentrate on the computation of the mean curvature vector at the point $T$. By \eqref{eq: H_formal} we need to show that
\begin{equation}\label{eq: H_T-sum-formula}
	H_T ~ = ~ \mathrm{Tr}\left (G^{-1}_{T}\cdot \left(\mathrm{d}^{\,2} T(\mathbf{0})\right)^\perp\right)  ~ = ~ 0,
\end{equation}
where $\left(\mathrm{d}^2T(\mathbf{0})\right)^\perp$ is the $\dim(\mathcal{T}_{d,\mathbf{r}})\times \dim(\mathcal{T}_{d,\mathbf{r}})$ matrix whose entries are normal components of second order partial derivatives of the parametrization \eqref{eq: nbhd} evaluated at $\mathbf{0}=(\mathbf{u}^1,\dots,\mathbf{u}^d,S)$. Taking into account the above discovered structure of $G_T^{-1}$, we note that the only second order derivatives of \eqref{eq: nbhd} that are relevant for the computation of $H_T$ are
\begin{align*}
\frac{\partial^2 T}{\partial s_{i_1\dots i_d} \partial s_{j_1\dots j_d}} (\mathbf{0})~ =~ 0,\quad (i_1,\dots, i_d),\ (j_1,\dots, j_d)\in [r_1]\times \dots\times [r_d],    
\end{align*}
and 
\begin{align}\label{eq: t and t}
		\dfrac{\partial^2 T}{\partial u^{j}_{\lambda\mu}\partial u^j_{\lambda'\mu}} (\mathbf{0})~ = ~ \sum_{k=1}^d \sum_{i_k=1}^{r_k}t_{i_1\dots i_d} e^1_{i_1}\otimes \cdots \otimes e^{j-1}_{i_{j-1}} \otimes \dfrac{\partial^2 g}{\partial u^j_{\lambda\mu}\partial u^j_{\lambda'\mu} }(\mathbf{0})e^{j}_{i_{j}} \otimes e^{j+1}_{i_{j+1}}\otimes \cdots \otimes e^d_{i_d},
\end{align}
		where	$j\in [d]$, $\lambda,\lambda'\in [r_j]$ and $\mu\in \{r_j+1,\dots, n_j\}$.
Using \eqref{eq d2 g at 0} we obtain 
\begin{align*}
    \frac{\partial^2 g}{\partial u^j_{\lambda\mu}\partial u^j_{\lambda'\mu}}(\mathbf{0})e^j_{i_j} = \left(-\delta_{\lambda\lambda'} E^j_{\mu\mu}-E^j_{\lambda\lambda'}\right) e^j_{i_j} = -\delta_{i_j\lambda'} e^j_\lambda
\end{align*}
and applying it to \eqref{eq: t and t} gives
\begin{align}\label{eq: t and t}
		\dfrac{\partial^2 T}{\partial u^{j}_{\lambda\mu}\partial u^{j}_{\lambda'\mu}}(\mathbf{0}) ~
		=~ -\sum_{k\in[d]\setminus j} \sum_{i_k=1}^{r_k}t_{i_1\dots i_{j-1}\lambda' i_{j+1}\dots i_d} ~e^1_{i_1}\otimes \cdots \otimes e^{j-1}_{i_{j-1}}\otimes e^{j}_{\lambda} \otimes e^{j+1}_{i_{j+1}}\otimes \cdots \otimes e^d_{i_d}.\nonumber
		\end{align}
Although this last computation reveals a nonzero vector we observe that all the indices $(i_1,\dots,i_{j-1},\lambda,i_{j+1},\dots,i_d)$ appearing in its expression range in $[r_1]\times\cdots\times [r_j]\times\cdots\times [r_d]$. This implies that \eqref{eq: t and t} lies in the tangent space $T_{T}\mathcal{T}_{d,\mathbf{r}}$, and hence its normal component $\left(	\dfrac{\partial^2 T}{\partial u^{j}_{\lambda\mu}\partial u^{j}_{\lambda'\mu}}(\mathbf{0}) \right)^\perp$ is zero. 
Thus, \eqref{eq: H_T-sum-formula} holds and this completes the proof.

\section{Linear optimization over the Segre variety}\label{sec:Linear}

In this section we consider the Segre variety, i.e., the manifold $\mathcal{S}_d=\mathcal{T}_{d,(1,\dots,1)}$ of tensors of order $d$ and multilinear rank $(1,\dots,1)$. Our goal is to prove \Cref{Theorem:Linear}. Let $T\in \mathcal{S}_d$ and let $P_{\vec{\ell}}$ be any affine hyperplane containing the tangent space to $\mathcal{S}_d$ at $T$. We will show that in any neighborhood of $T\in \mathcal{S}_d$ there are rank-one tensors lying on both sides of $P_{\vec{\ell}}$. To do so, we construct two curves $\gamma^+, \gamma^-:(-\varepsilon,\varepsilon)\to \mathcal{S}_d$ passing through $T=\gamma^+(0)=\gamma^-(0)$ and such that $\pm\langle \gamma^\pm(u)-T,\vec{\ell}\rangle >0$ for $0<u<\varepsilon$. 
The last condition simply means that the rank-one tensors $\gamma^+(u)$ and $\gamma^-(u)$ lie in the opposite open half-spaces formed by the affine hyperplane $P_{\vec{\ell}}$. Note that the restriction of the action \eqref{eq:action} of $O(\mathbf{n})=O(n_1)\times \dots\times O(n_d)$ to $\mathcal{S}_d$ is transitive. This together with the fact that \eqref{eq:action} preserves the inner product \eqref{Frob} imply that it is enough to prove the statement when $T=e^1_1\otimes \dots\otimes e_1^d\in \mathcal{S}_d$. 

Before we proceed with the construction of curves $\gamma^+$ and $\gamma^-$, let us investigate the structure of the normal space $N_T \mathcal{S}_d$ to the Segre variety $\mathcal{S}_d$ at $T=e^1_1\otimes \dots\otimes e_1^d\in \mathcal{S}_d$. We have an orthogonal direct sum decomposition, which we denote by $\oplus^{\perp}$:
\begin{equation}\label{eq:dec_normal}
    N_T \mathcal{S}_d = N_2 \oplus^{\perp} N_3 \oplus^{\perp} \cdots \oplus^{\perp} N_d,
\end{equation}
where
\begin{equation*}
    N_k = \text{Span} \, \{ e_1^1 \otimes \cdots \otimes w_{i_1} \otimes \cdots \otimes w_{i_k} \otimes \cdots \otimes e_1^d \,\, : \,\, 1 \leq i_1 < \cdots < i_k \leq d, \,\, w_{i_j} \in (e_1^{i_j})^{\perp} \}.
\end{equation*}
With these notations we may also write
\begin{equation*}
    N_0 = \text{Span} \, \{ e_1^1 \otimes \cdots \otimes e_1^d \} = \text{Span} \, \{ T \}
\end{equation*}
and
\begin{equation*}
    N_1 = \text{Span} \, \{ e_1^1 \otimes \cdots \otimes e_1^{i-1} \otimes w_{i} \otimes e_1^{i+1} \otimes \cdots \otimes e_1^d \,\, : \,\, 1 \leq i \leq d, \,\, w_{i} \in (e_1^{i})^{\perp} \},
\end{equation*}
where $N_0 + N_1 = T_T \mathcal{S}_d$ is the tangent space to $\mathcal{S}_d$ at $T$. It is straightforward to check that the subspaces $N_k$, $k=0,\dots,d$, are pairwise orthogonal with respect to the inner product \eqref{Frob}. 

By assumption the vector $\vec{\ell}$ belongs to the normal space $N_T\mathcal{S}_d$. 
Let $k\geq 2$ be the smallest natural number such that $\vec{\ell}$ has nonzero component in $N_k$. For example, if $k=2$, the vector $\vec{\ell}$ has nonzero coefficient in front of one of the basic tensors $e_1^1 \otimes \cdots \otimes e_{\alpha_1}^{i_1} \otimes \cdots \otimes e_{\alpha_2}^{i_2} \otimes \cdots \otimes e_1$ for $\alpha_1, \alpha_2 > 1$ and some indices $1\leq i_1<i_2\leq d$. We first prove an auxiliary lemma.


\begin{lemma}\label{lemma: curve}
Let $\vec{\ell}$ have nonzero coefficient $c(\vec{\ell},\alpha_1,\dots,\alpha_k)\neq 0$ in front of the basic tensor $e^1_{\alpha_1} \otimes \cdots \otimes e^k_{\alpha_k} \otimes e^{k+1}_1 \otimes \cdots \otimes e^d_1 \in N_k$, where $\alpha_1,\dots,\alpha_k>1$. Then the curve 
\begin{align}\label{eq:curve}
    \gamma = \{ \,\, ( e^{u L_{1,\alpha_1}^1} e^1_1 ) \otimes \cdots \otimes ( e^{u L_{1,\alpha_k}^k} e^k_1 ) \otimes e^{k+1}_1 \otimes \cdots \otimes e^d_1 \,\, : \,\, u \in (-\varepsilon, \varepsilon) \,\, \} \subset \mathcal{S}_d
\end{align} 
satisfies $\langle \gamma^{(j)}(0), \vec{\ell} \rangle = 0$ for all $j \in \{0,1,\dots,k-1\}$ and $\langle \gamma^{(k)}(0), \vec{\ell} \rangle = k! \, c(\vec{\ell}, \alpha_1,\dots,\alpha_k )$.
\end{lemma}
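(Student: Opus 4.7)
The plan is to expand $\gamma(u)$ via elementary trigonometric identities, apply the Leibniz rule for tensor products, and then use the orthogonal decomposition \eqref{eq:dec_normal} to isolate the components of $\gamma^{(j)}(0)$ that pair nontrivially with $\vec{\ell}$.

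First I would observe that $L^j_{1,\alpha_j}$ acts as a $90^\circ$ rotation on $\text{Span}\{e^j_1, e^j_{\alpha_j}\}$ and trivially on its orthogonal complement, so $e^{u L^j_{1,\alpha_j}} e^j_1 = \cos(u)\, e^j_1 + \sin(u)\, e^j_{\alpha_j}$. Setting $f_j(u) = \cos(u)\, e^j_1 + \sin(u)\, e^j_{\alpha_j}$ for $j \leq k$, the curve becomes
\[
\gamma(u) = f_1(u) \otimes \cdots \otimes f_k(u) \otimes e^{k+1}_1 \otimes \cdots \otimes e^d_1.
\]
A direct computation shows $f_j^{(m)}(0) = \pm e^j_1$ if $m$ is even and $f_j^{(m)}(0) = \pm e^j_{\alpha_j}$ if $m$ is odd.

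The Leibniz rule for tensor products then gives
\[
\gamma^{(m)}(0) \;=\; \sum_{m_1 + \cdots + m_k = m} \binom{m}{m_1, \ldots, m_k}\, f_1^{(m_1)}(0) \otimes \cdots \otimes f_k^{(m_k)}(0) \otimes e^{k+1}_1 \otimes \cdots \otimes e^d_1.
\]
Each summand is, up to a sign, a basis tensor $e^1_{\beta_1} \otimes \cdots \otimes e^d_{\beta_d}$ where $\beta_j \in \{1, \alpha_j\}$ is determined by the parity of $m_j$ for $j \leq k$, and $\beta_j = 1$ for $j > k$. Consequently, such a summand lies in the subspace $N_s$ with $s = \#\{j \leq k : m_j \text{ odd}\}$.

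For $m < k$, any multi-index summing to $m$ has at most $m < k$ odd entries, so every summand of $\gamma^{(m)}(0)$ lies in $N_0 \oplus \cdots \oplus N_{k-1}$, a subspace orthogonal to $\vec{\ell}$ by the minimality of $k$; hence $\langle \gamma^{(m)}(0), \vec{\ell}\rangle = 0$. For $m = k$, requiring all $k$ entries $m_j$ to be simultaneously odd while summing to $k$ forces $m_j = 1$ for every $j$. This unique multi-index contributes the summand $k!\, e^1_{\alpha_1} \otimes \cdots \otimes e^k_{\alpha_k} \otimes e^{k+1}_1 \otimes \cdots \otimes e^d_1 \in N_k$, whose inner product with $\vec{\ell}$ extracts exactly the coefficient $c(\vec{\ell}, \alpha_1, \ldots, \alpha_k)$; all other summands lie in $N_0 \oplus \cdots \oplus N_{k-1}$ and vanish upon pairing. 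This yields $\langle \gamma^{(k)}(0), \vec{\ell}\rangle = k!\, c(\vec{\ell}, \alpha_1, \ldots, \alpha_k)$.

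No real obstacle is anticipated: the only subtlety is the bookkeeping that identifies the parity pattern of a Leibniz multi-index with the graded piece $N_s$ containing the resulting basis tensor, and the observation that at order $m=k$ the unique multi-index landing in the top piece $N_k$ is $(1,\ldots,1)$.
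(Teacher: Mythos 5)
Your proof is correct and follows essentially the same route as the paper's: a Leibniz-rule expansion of the derivatives of $\gamma$ combined with the orthogonal grading $N_0\oplus^{\perp}\cdots\oplus^{\perp}N_d$, noting that a summand with $s$ odd multi-index entries lands in $N_s$ and that the unique summand reaching $N_k$ at order $k$ is the multi-index $(1,\dots,1)$ with multinomial coefficient $k!$. Your explicit $\cos/\sin$ form of $e^{uL^j_{1,\alpha_j}}e^j_1$ and the parity bookkeeping simply spell out what the paper leaves implicit.
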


\begin{proof}
This follows from Leibniz' rule. Indeed we have that
\begin{align*}
    \gamma(0) &= e^1_1 \otimes \cdots \otimes e^d_1=T \in N_0\\
    \gamma'(0) &= \sum_{j=1}^k e^1_1 \otimes \cdots \otimes e^j_{\alpha_j} \otimes \cdots \otimes e^d_1 \in N_1 \\
     & \vdots \\
    \gamma^{(k)}(0) &= W + \,\, k! \, e^1_{\alpha_1} \otimes \cdots \otimes e^k_{\alpha_k} \otimes e^{k+1}_1 \otimes \cdots \otimes e^d_1,
\end{align*}
with $W\in N_0 \oplus^{\perp} N_1 \oplus^{\perp} \cdots \oplus^{\perp} N_{k-1}$. The condition $\vec{\ell}\in N_k$,  orthogonality of $N_0, N_1,\dots, N_{k-1}, N_k$ and  orthogonality of basic tensors imply the claim.
\end{proof}

We now proof the main result of this section.\\

\noindent
\emph{Proof of \Cref{Theorem:Linear}:}
Let $\vec{\ell}\in N_k\oplus^\perp\dots\oplus^\perp N_d$, $k\geq 2$, and $\gamma: (-\varepsilon,\varepsilon)\rightarrow \R$ be as in Lemma \ref{lemma: curve}.
The Taylor expansion
\begin{align*}
    \gamma(u) = \gamma(0) + \gamma'(0) u + \gamma''(0) \frac{u^2}{2!} + \cdots + \gamma^{(k-1)}(0) \frac{u^{k-1}}{(k-1)!} + \gamma^{(k)}(0) \frac{u^k}{k!} + \cdots
\end{align*}
and Lemma \ref{lemma: curve} imply that the (local) position of $\gamma(u)$ (for small $u>0$) with respect to $P_{\vec{\ell}}=\{T+V\,:\,\langle \vec{\ell},V\rangle = 0\}$ is determined by the sign of $\langle \gamma^{(k)}(0), \vec{\ell} \rangle = k! \, c(\vec{\ell}, \alpha_1,\dots,\alpha_k)$.
If $c(\vec{\ell},\alpha_1,\dots,\alpha_k)>0$ define $\gamma^+$ to be $\gamma$, while  if $c(\vec{\ell},\alpha_1,\dots,\alpha_k)<0$ set $\gamma^-=\gamma$.
The curve $\gamma^-$ (respectively $\gamma^+$) is then defined to be 
\begin{align*}
   \tilde{\gamma}\ =\ \{ \,\, ( e^{-u L_{1,\alpha_1}^1} e^1_1 ) \otimes ( e^{u L_{1,\alpha_2}^2} e^2_1) \otimes  \cdots \otimes ( e^{u L_{1,\alpha_k}^k} e^k_1 ) \otimes e^{k+1}_1 \otimes \cdots \otimes e^d_1\,:\,  u\in (-\varepsilon,\varepsilon) \,\, \}\subset \mathcal{S}_d.
\end{align*}
Note that the single difference is in the sign in front of the parameter $u$ in the first tensor factor. This change in the sign is needed to flip the sign of the inner product $\langle \gamma^{(k)}(0),\vec{\ell}\rangle$ as $\langle \tilde\gamma^{(j)}(0),\vec{\ell}\rangle =0$ for $j=0,1,\dots,k-1$ and $\langle \tilde\gamma^{(k)}(0),\vec{\ell}\rangle =-k!\,c(\vec{\ell},\alpha_1,\dots,\alpha_k)$. This computation follows exactly the same path as the proof of \Cref{lemma: curve}.
In both cases it follows from the definition of $\gamma^\pm$ and Taylor expansion that $\pm\langle \gamma^\pm(u)-T,\vec{\ell}\rangle >0$ for small enough $u>0$.

In general, if $\vec{\ell}\in N_k\oplus^\perp\dots\oplus^\perp N_d$ has a non-zero coefficient in front of the basic tensor $e^1_1\otimes \dots\otimes e^{i_1}_{\alpha_1}\otimes \dots\otimes e^{i_k}_{\alpha_k} \otimes \dots\otimes e^d_{1}\in N_k$ with $1\leq i_1<\dots<i_k\leq d$ and $\alpha_1,\dots,\alpha_k>1$, one has to consider the curve
\begin{align*}
      \gamma = \{ \,\, e^1_1\otimes \dots\otimes ( e^{u L_{1,\alpha_1}^{i_1}} e^{i_1}_1 ) \otimes \cdots \otimes ( e^{u L_{1,\alpha_k}^{i_k}} e^{i_k}_1 ) \otimes \dots\otimes  e^{d}_1 \,\, : \,\, u \in (-\varepsilon, \varepsilon) \,\, \} \subset \mathcal{S}_d
\end{align*}
instead of \eqref{eq:curve}. All other steps are identical. 

The above proves that there are tensors in an arbitrarily small neighborhood of $T\in \mathcal{S}_d$ that lie on either side of the affine hyperplane $P_{\vec{\ell}}$. In particular, the restriction of a linear functional to $\mathcal{S}_d$ attains no local minima or maxima.
\qed\\

\begin{remark}\label{rem:non-degenerate}
Let $M\subseteq \R^n$ be a minimal submanifold such that for every point $x\in M$ and every unit normal vector $\vec{\ell}\in N_xM$ the second fundamental form 
\begin{align}\label{eq:bl}
    u, v\in T_xM \rightarrow \langle b(u,v),\vec{\ell}\rangle
\end{align}
with respect to $\vec{\ell}$ is non-degenerate. 
Because of minimality of $M$, \eqref{eq:bl} has to have a negative and a positive eigenvalue. Geometrically this means that no small neighborhood of $x\in M$ is completely contained in one of the half-spaces $P^\pm_{\vec{\ell}}=\{x+v\,:\,\pm\langle \vec{\ell},v\rangle\geq 0\}$. In particular, no linear functional $L_{\vec{\ell}}(x)=\langle \vec{\ell},x\rangle$ attains a local minimum or a local maximum on $M$.
Note however that the Segre variety $\mathcal{S}_d$ does not satisfy the above condition on the second fundamental form. 
Indeed, one can see that all second order derivatives of the parametrization \eqref{eq: nbhd} evaluated at $T=T(\mathbf{0})$ lie in $N_0\oplus^{\perp} N_1\oplus^{\perp} N_2$. Thus, for all $\vec{\ell}\in N_k$ with $k>2$ (see the decomposition \eqref{eq:dec_normal}) the bilinear form \eqref{eq:bl} is trivial. 
That is why, in Theorem \ref{Theorem:Linear} we have to use a neater argument to prove nonexistence of local extrema of linear functionals on $\mathcal{S}_d$.
\end{remark}

We now derive Corollary \ref{cor:A}, which asserts that (when nonconstant) the restriction of a linear functional to the affine-linear slice $\mathcal{S}_d\cap A$ of the Segre variety has no local minima or maxima as well.\\


\noindent
\emph{Proof of \Cref{cor:A}:} Let $A$ be the affine hyperplane $\{T\in \R^{n_1}\otimes\dots\otimes\R^{n_d}\,:\, \langle T,\vec{a}\rangle = c\}$. Observe first that the Segre variety $\mathcal{S}_d$ is \emph{conical}, i.e., $\lambda T \in \mathcal{S}_d$ for any $T\in \mathcal{S}_d$ and any $\lambda\in\mathbb{R}\setminus\{0\}$. Assume that $T\in \mathcal{S}_d\cap A$ is a local minimum (maximum) for the restriction of a linear functional $L_{\vec{\ell}}(V)=\langle\vec{\ell},V\rangle$ on $\mathcal{S}_d\cap A$.  
Since the restriction of $L_{\vec{\ell}}$ to $A$ is assumed to be nonconstant, we have in particular that $\vec{\ell}$ is not proportional to $ \vec{a}$. Then $\vec{\ell},\vec{a}$ span a two-dimensional subspace, which must intersect any hyperplane. Therefore there exists $\mu \in \mathbb{R}$ with $\vec{v} = \vec{\ell} + \mu \vec{a}$ orthogonal to $T$. If $A$ is a linear hyperplane ($c=0$), then $\langle \vec{\ell}, T\rangle =0$ holds automatically because in this case $\mathcal{S}_d\cap A$ remains conical and $T\in T_T(\mathcal{S}_d\cap A)$. Note that the coefficient of $\vec{\ell}$ can be assumed equal to $1$ since any rescaling of $\vec{\ell}$ would produce another linear functional which also has $T$ as a local minimum (maximum) on $\mathcal{S}_d \cap A$. 
The condition $L_{\vec{v}}(T)=0$ implies that $T\in\mathcal{S}_d\cap A$ is a local minimum (maximum) of $L_{\vec{v}}$ restricted to $\mathcal{S}_d$. This contradicts to \Cref{Theorem:Linear}.
\qed\\


\begin{figure}[h]
    \centering
    \includegraphics[width=0.3\textwidth]{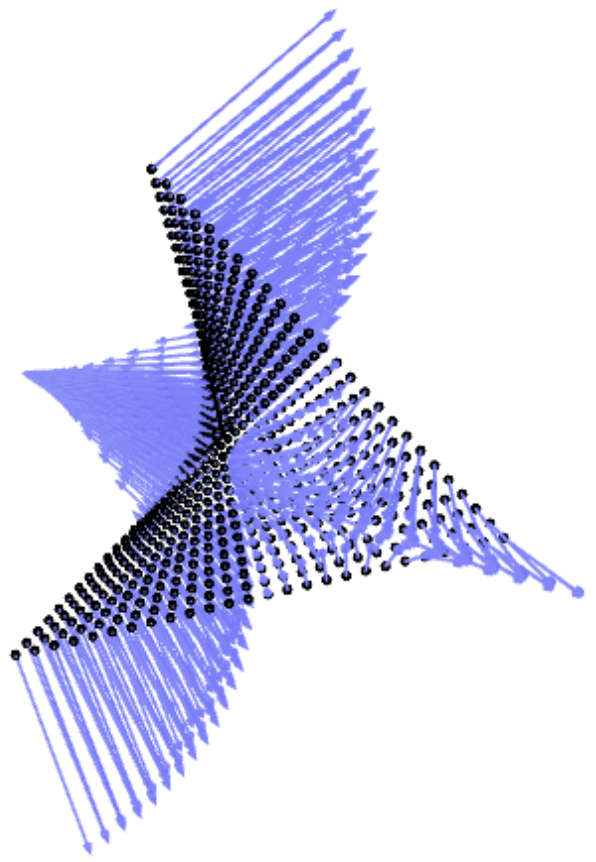}
    
    
    
    \caption{Mean curvature vector field on $\mathcal{M}$, the independence model for two binary discrete random variables.}
    \label{fig:vector-field}
\end{figure}
Observe that, while \Cref{cor:A} shows that the absence of local minima and maxima is preserved under affine hyperplane sections, the same is not true for minimality. \Cref{fig:vector-field} depicts the mean curvature field (blue vectors) for (the black pointed surface)
\begin{align*}
    \mathcal{S}_2\cap A,\quad A\ =\ \{ S=(s_{ij})_{i,j=1,2}\in \R^2\otimes \R^2\,:\, s_{11}+s_{12}+s_{21}+s_{22}=1\}.
\end{align*} 
Since this vector field is evidently nonzero, this surface $\mathcal{S}_2 \cap A$ is not minimal. If we restrict ourselves to tensors with nonnegative entries we obtain a statistical independence model, as explained in the introduction. This is precisely the portion of $\mathcal{S}_2\cap A$ captured in \Cref{fig:vector-field}. We make the code to compute the mean curvature for this example available at 
\begin{center}
    \url{https://mathrepo.mis.mpg.de/tensorminimality}.
\end{center}

\bibliographystyle{plain}
\bibliography{references}
\vspace{1cm}
\end{document}